\font\notefont=cmsl8
\theoremstyle{plain}
\newtheorem{thm}{THEOREM}[section]
\newtheorem{lm}[thm]{LEMMA}
\theoremstyle{definition}
\theoremstyle{remark}                                %
\newcommand{\R}{{\mathord{\mathbb R}}}
\newcommand{\RR}{{\mathord{\mathbb R}}^n}
\newcommand{\SN}{\mathord{\mathbb S}}
\newcommand{\HN}{\mathord{\RR_+}}
\newcommand{\ud}{\mspace{3mu}\mathrm{d}}
\newcommand{\supp}{{\mathop{\rm supp\ }}}
\newcommand{\ez}{\epsilon\rightarrow0}
\newcommand{\ft}{\widetilde{f}}
\newcommand{\IH}{I_{\alpha}^{\HN}}
\newcommand{\JH}{J_{\alpha}^{\HN}}
\newcommand{\RQ}{\Phi_{\alpha}}
\newcommand{\dvx}{\frac{\ud x}{x_n^{(1-\alpha)/2}}}
\newcommand{\dvy}{\frac{\ud y}{y_n^{(1-\alpha)/2}}}
\newcommand{\inti}{\int_0^{\infty}}
\begin{document}
\allowdisplaybreaks

\title{\bf{A Fractional Hardy-Sobolev-Maz'ya Inequality on the Upper Halfspace}}
\author{Craig A. Sloane\\
\small{Georgia Institute of Technology, Atlanta, Georgia 30332-0160}\\
\small{csloane@math.gatech.edu}\\}
\date{\today}
\maketitle

\begin{abstract}
We prove several Sobolev inequalities, which are then used to establish a fractional Hardy-Sobolev-Maz'ya inequality on the upper halfspace.
\end{abstract}

\section{Introduction}
The present work answers a question by Frank and Seiringer \cite{FS2} concerning fractional Hardy-Sobolev-Maz'ya inequalities for the upper halfspace in the case $p=2$. Let
\[\HN = \left\{x = \left(x',x_n\right) \in\RR : x' \in \R^{n-1}, x_n > 0\right\}\]
be the upper halfspace, and let $\Omega$ be a domain in $\RR$ with nonempty boundary. Then, there exists a fractional Hardy inequality on $\HN$ which states that there exists $D_{n,p,\alpha} > 0$ so that for all $f \in C_c(\HN)$,
\begin{equation}
\label{hardy}
\int_{\HN\times\HN}{\frac{|f(x)-f(y)|^p}{|x-y|^{n+\alpha}}\ud x \ud y} \geq D_{n,p,\alpha}\int_{\HN}{\frac{|f(x)|^p}{x_n^{\alpha}} \ud x},
\end{equation}
where $1 \leq p < \infty$, $0 < \alpha < p$, and $\alpha \neq 1$. See, \emph{e.g.}, \cite{D}. Bogdan and Dyda found, in \cite{BD}, the sharp	constant $D_{n,2,\alpha}$ for the $p=2$ case. Later, the sharp constant, $D_{n,p,\alpha}$, for general $p$ was found in \cite{FS2}. Therein, the authors posed the question whether there existed a lower bound to the remainder for the inequality in \eqref{hardy} that is a positive multiple of the $L^{p^*}$-norm of $f$, where $p^* = np/(n-\alpha)$ is the critical Sobolev exponent. Such an inequality would be a fractional analogue to the Hardy-Sobolev-Maz'ya inequality on the halfspace. Maz'ya \cite{M} was the first to show the general result
\[
\int_{\HN}{|\nabla f(x)|^2 \ud x} - \frac{1}{4}\int_{\HN}{\frac{|f(x)|^2}{x_n^2} \ud x}
			\geq C_n\left(\int_{\HN}{|f(x)|^{\frac{2n}{n-2}} \ud x}\right)^{\frac{n-2}{n}}.
\]
More recently the existence of minimizers for dimensions greater than or equal to 4 \cite{TT} and the sharp constant for dimension 3 \cite{BFL} has been established. Further improvements in the general case have been shown in \cite{FTT}. 

The following Theorem for the sharp fractional hardy inequality with remainder was proven by Frank and Seiringer in \cite{FS2}.

\begin{thm}
Let $n \ge 1$, $2 \le p \le \infty$, and $0 < \alpha < p$ with $\alpha \ne 1$. Then for all $f \in C_c^{\infty}(\HN)$,
\begin{align}
\label{remainder}
\nonumber
\int_{\Omega\times\Omega}{\frac{|f(x)-f(y)|^p}{|x-y|^{n+\alpha}}\ud x \ud y}
&-			D_{n,p,\alpha} \int_{\HN}{\frac{|f(x)|^p}{x_n^{\alpha}} \ud x}\\
&\ge		c_p \int_{\Omega\times\Omega}{\frac{|x_n^{(1-\alpha)/p}f(x)-y_n^{(1-\alpha)/p}f(y)|^p}{|x-y|^{n+\alpha}} \dvx \dvy}
\end{align}
where $0 < c_p \le 1$ is given by 
\[
c_p := \min_{0 < \tau < 1/2}\left((1-\tau)^p - \tau^p + p\tau^{p-1}\right).
\]
If $p=2$ then this is an equality with $c_p = 1$.
\end{thm}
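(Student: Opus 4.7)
The strategy is the ground-state representation of Frank--Seiringer. Observe that the sharp Hardy inequality \eqref{hardy} is (formally) saturated by the virtual ground state $w(x) = x_n^{(\alpha-1)/p}$. Substitute $f(x) = w(x) g(x)$, equivalently $g(x) = x_n^{(1-\alpha)/p} f(x)$, which is precisely the quantity appearing inside the remainder in \eqref{remainder}, and note that the measure $\dvx$ is the one making the weighted $g$-seminorm homogeneous under the scaling $x_n \mapsto \lambda x_n$.

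\emph{Case $p = 2$.} Expanding the square yields the algebraic identity
\[
|w(x)g(x) - w(y)g(y)|^2 = w(x)w(y) |g(x)-g(y)|^2 + (w(x)-w(y))\bigl(w(x)g(x)^2 - w(y)g(y)^2\bigr).
\]
Divide by $|x-y|^{n+\alpha}$ and integrate. The second term symmetrizes to $2\int_{\HN} g(x)^2 w(x) h(x)\,\ud x$, where $h(x) = \mathrm{p.v.}\int_{\HN} (w(x)-w(y))|x-y|^{-n-\alpha}\,\ud y$. By translation invariance in $x'$ and scaling in $x_n$ one has $h(x) = C\, x_n^{(\alpha-1)/2 - \alpha}$, and the Bogdan--Dyda computation of the sharp Hardy constant identifies $2C = D_{n,2,\alpha}$. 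Since $w(x)w(y) = (x_ny_n)^{(\alpha-1)/2}$ absorbs into $\dvx\dvy$, the first term is exactly the claimed remainder. Equality with $c_2 = 1$ follows.

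\emph{Case $p > 2$.} The square identity no longer works, so replace it by a pointwise inequality. After normalizing $w(x)g(x) = 1$ and setting $\tau = w(y)g(y)/(w(x)g(x)) \in (0, 1/2)$ (using symmetry in $x,y$ to arrange this), the function $\tau \mapsto (1-\tau)^p - \tau^p + p\tau^{p-1}$ bounds below $c_p$, yielding a pointwise inequality of the form
\[
|w(x)g(x) - w(y)g(y)|^p \ge c_p\, w(x)^{p/2} w(y)^{p/2} |g(x) - g(y)|^p + \mathcal{H}(x,y),
\]
where $\mathcal{H}$ is a ``linear-in-$g^p$'' density. Integrating against $|x-y|^{-(n+\alpha)}\,\ud x\,\ud y$, symmetrizing, and invoking the sharp Hardy inequality of \cite{FS2} to recognize $\iint \mathcal{H}/|x-y|^{n+\alpha}\,\ud x\,\ud y = D_{n,p,\alpha}\int |f|^p x_n^{-\alpha}\,\ud x$ produces \eqref{remainder}.

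\emph{Main obstacle.} The crux is the sharp pointwise inequality realizing the explicit constant $c_p$. Unlike the $p=2$ case, which is an algebraic identity, for $p > 2$ one must minimize a one-variable function, verify that the minimizing $\tau$ lies in $(0, 1/2)$, and — more delicately — confirm that the cross term $\mathcal{H}$ integrates \emph{exactly} to the sharp Hardy density rather than merely bounding it, so that no additional slack is lost. This matching between the algebraic extremal $\tau$ and the sharp Hardy constant $D_{n,p,\alpha}$ is what forces the choice of the weight $w$ and the particular form of the density on the right-hand side of \eqref{remainder}.
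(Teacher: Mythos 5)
This theorem is quoted verbatim from Frank and Seiringer \cite{FS2} and is not proved in the paper, so there is no internal proof to compare against; the assessment is of your sketch on its own merits. Your $p=2$ argument is the correct ground-state representation and is essentially complete: the algebraic identity you wrote is exact (both sides expand to $w(x)^2g(x)^2 + w(y)^2g(y)^2 - 2w(x)w(y)g(x)g(y)$), the cross term symmetrizes to $2\int g(x)^2 w(x) h(x)\,\ud x$ with $h(x) = Cx_n^{(\alpha-1)/2-\alpha}$ by translation and dilation covariance, $2C=D_{n,2,\alpha}$ is the Bogdan--Dyda computation \cite{BD}, and since $g(x)^2 w(x)\,x_n^{(\alpha-1)/2-\alpha} = f(x)^2 x_n^{-\alpha}$ the Hardy term comes out exactly while the leading term is $\JH(f)$ on the nose, giving equality with $c_2=1$.

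For $p>2$ the sketch is only a program, with a gap at the one step that matters. You write that ``using symmetry in $x,y$'' one may take $\tau = w(y)g(y)/(w(x)g(x)) \in (0,1/2)$; but exchanging $x$ and $y$ sends $\tau\mapsto 1/\tau$, so symmetry only lets you assume $\tau\le 1$, not $\tau<1/2$. The range $(0,1/2)$ in the definition of $c_p$ is a property of the precise pointwise two-point inequality underlying the Frank--Seiringer argument, not a consequence of symmetrizing the domain of integration. That inequality --- with the correct linear-in-$f^p$ term $\mathcal{H}$, the correct weight $(w(x)w(y))^{p/2}|g(x)-g(y)|^p$ as the leading remainder, and a proof that its minimal constant is $\min_{0<\tau<1/2}\bigl((1-\tau)^p-\tau^p+p\tau^{p-1}\bigr)$ --- is the entire substance of the $p>2$ case, and you neither state nor prove it. Likewise, your claim that $\iint \mathcal{H}\,|x-y|^{-n-\alpha}\,\ud x\,\ud y$ equals $D_{n,p,\alpha}\int|f|^p x_n^{-\alpha}\,\ud x$ exactly is asserted rather than verified; for $p>2$ the integrand in $\mathcal{H}$ is no longer antisymmetric in $x\leftrightarrow y$, so both the principal-value convergence and the identification of the induced potential with the sharp Hardy density require the nonlinear ground-state representation of \cite{FS1,FS2}. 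You correctly flag these points as the ``main obstacle,'' but flagging them is not clearing them, so the $p>2$ part of \eqref{remainder} is not established by this sketch.
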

For notational convenience, we write
\[
I_{\alpha,p}^{\Omega}(f)
=			\int_{\Omega\times\Omega}{\frac{|f(x)-f(y)|^p}{|x-y|^{n+\alpha}}\ud x \ud y},
\]
and
\[
J_{\alpha,p}^{\Omega}(f)
=			\int_{\Omega\times\Omega}{\frac{|x_n^{(1-\alpha)/p}f(x)-y_n^{(1-\alpha)/p}f(y)|^p}{|x-y|^{n+\alpha}} \dvx \dvy}.
\]
Since we are primarily concerned with the case $p=2$, we further denote $I_{\alpha}^{\Omega} = I_{\alpha,2}^{\Omega}$ and $J_{\alpha}^{\Omega} = J_{\alpha,2}^{\Omega}$.
Thus, for $p=2$, we can rewrite \eqref{remainder} as
\begin{equation}
\IH(f) - D_{n,2,\alpha}\int_{\HN}{|f(x)|^2 x_n^{-\alpha} \ud x}
=				\JH(f),
\end{equation}

The main result of this paper is the following fractional Hardy-Sobolev-Maz'ya inequality for $p=2$.

\begin{thm}
\label{hsmthm}
Let $n \geq 2$, $1<\alpha<2$. There exists $a_{n,\alpha} > 0$ so that
\begin{equation}
\label{hsm}
\int_{\HN\times\HN}{\frac{|f(x)-f(y)|^2}{|x-y|^{n+\alpha}}\ud x \ud y} - D_{n,2,\alpha}\int_{\HN}{\frac{|f(x)|^2}{x_n^{\alpha}} \ud x}
			\geq a_{n,\alpha}\left(\int_{\HN}{|f(x)|^{2^*} \ud x}\right)^{2/{2^*}},
\end{equation}
for all $f \in C_c^{\infty}(\HN)$.
\end{thm}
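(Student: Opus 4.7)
By Theorem 1.1 with $p=2$ the left-hand side of \eqref{hsm} equals $\JH(f)$ identically, so the task reduces to proving the weighted Sobolev-type inequality
\[
\JH(f)\ \geq\ a_{n,\alpha}\Bigl(\int_{\HN}|f(x)|^{2^*}\ud x\Bigr)^{2/2^*}
\]
for every $f\in C_c^\infty(\HN)$.

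First I would perform the ground-state substitution $h(x):=x_n^{(1-\alpha)/2}f(x)$, so that with $s:=(\alpha-1)/2\in(0,1/2)$ the inequality takes the form
\[
\int_{\HN\times\HN}\frac{|h(x)-h(y)|^2}{|x-y|^{n+\alpha}}\,x_n^{s}y_n^{s}\,\ud x\,\ud y
\ \geq\ a_{n,\alpha}\Bigl(\int_{\HN}|h(x)|^{2^*}\,x_n^{2^*s}\,\ud x\Bigr)^{2/2^*}.
\]
Because $f$ is compactly supported in $\HN$, $h$ lies in $C_c^\infty(\HN)$ and vanishes in a neighborhood of $\partial\HN$; both sides now carry matched boundary-vanishing weights.

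Next I would extend $h$ by even reflection $\tilde h(x',x_n):=h(x',|x_n|)$, which lies in $C_c^\infty(\RR)$ by the support property, and apply a weighted fractional Sobolev inequality on $\RR$ of the form
\[
\int_{\RR\times\RR}\frac{|H(x)-H(y)|^2}{|x-y|^{n+\alpha}}|x_n|^{s}|y_n|^{s}\,\ud x\,\ud y
\ \geq\ C_{n,\alpha}\Bigl(\int_\RR |H(x)|^{2^*}|x_n|^{2^*s}\,\ud x\Bigr)^{2/2^*}\qquad(H\in C_c^\infty(\RR)),
\]
an anisotropic analogue of the Stein--Weiss weighted Sobolev inequality. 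Decomposing the integral for $\tilde h$ according to the signs of $x_n,y_n$, the two same-sign quadrants each contribute $\JH(f)$, while the two mixed-sign quadrants each contribute $R(h):=\int_{\HN\times\HN}|h(x)-h(y)|^2\,|x-y^*|^{-(n+\alpha)}\,x_n^{s}y_n^{s}\,\ud x\,\ud y$ with $y^*:=(y',-y_n)$. The elementary identity $|x-y^*|^2-|x-y|^2=4x_ny_n\geq 0$ on $\HN\times\HN$ gives $R(h)\leq\JH(f)$, and coupled with $\|\tilde h\|_{L^{2^*}(|x_n|^{2^*s}\ud x)}^{2^*}=2\|f\|_{L^{2^*}(\HN)}^{2^*}$ this produces $4\JH(f)\geq 2^{2/2^*}C_{n,\alpha}\|f\|_{L^{2^*}(\HN)}^{2}$, yielding \eqref{hsm} with $a_{n,\alpha}=2^{2/2^*-2}C_{n,\alpha}$.

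The hard part will be the weighted Sobolev inequality on $\RR$ above: the weight $|x_n|^{s}$ depends only on the distance to a hyperplane rather than being a radial power, so the classical Stein--Weiss argument does not apply directly. A natural route is to take the partial Fourier transform in $x'$ and reduce to a family of one-dimensional weighted fractional Riesz-potential bounds in $x_n$; since $|s|<1/2$, the weight $|x_n|^{s}$ lies in the one-variable Muckenhoupt $A_2$ class, making the standard one-dimensional weighted Hardy--Littlewood--Sobolev estimates available. Assembling these fibrewise bounds into the full $n$-dimensional weighted Sobolev inequality would constitute the technical core of the paper, presumably corresponding to the ``several Sobolev inequalities'' the abstract refers to.
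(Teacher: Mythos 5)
Your reduction steps are correct as far as they go: the ground-state substitution $h=x_n^{(1-\alpha)/2}f$ does turn $\JH(f)$ into $\int\int |h(x)-h(y)|^2|x-y|^{-(n+\alpha)}x_n^s y_n^s\,\ud x\,\ud y$ with $s=(\alpha-1)/2\in(0,1/2)$, the target becomes $(\int |h|^{2^*}x_n^{2^*s}\ud x)^{2/2^*}$, the even reflection $\tilde h$ is smooth because $f$ vanishes near $\partial\HN$, the quadrant decomposition is exact, and the bound $R(h)\leq\JH(f)$ from $|x-y^*|\geq|x-y|$ is right. The algebra giving $4\JH(f)\geq 2^{2/2^*}C_{n,\alpha}\|f\|_{2^*}^2$ also checks out. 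So you have successfully shown that Theorem \ref{hsmthm} would follow from the proposed weighted fractional Sobolev inequality on $\RR$.

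The gap is that this weighted inequality carries all of the difficulty of the theorem, and your sketch for proving it does not go through. Taking the partial Fourier transform in $x'$ is a natural first move for an $L^2\to L^2$ estimate (Plancherel), but the target here is $L^{2^*}(\,|x_n|^{2^*s}\ud x)$ with $2^*\neq 2$, so Plancherel gives you nothing on the right-hand side. Moreover the kernel $|x-y|^{-(n+\alpha)}$ does not factor over $x'$ and $x_n$, so the form does not decouple into a family of one-dimensional fractional forms in $x_n$ indexed by $\xi'$; what you get is a nonlocal operator in $x_n$ whose symbol depends jointly on $\xi'$ and $x_n-y_n$, and the fact that $|t|^s\in A_2(\R)$ does not feed into an $L^2\to L^{2^*}$ weighted Sobolev bound in a standard way. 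A weighted fractional Sobolev inequality with a distance-to-hyperplane power weight of exactly this form is not an off-the-shelf result, and no concrete mechanism for establishing it is given — so the proof is not complete.

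The paper proceeds along an entirely different route. Using the conformal map $T$ to the unit ball and the Carlen--Loss competing-symmetries machinery (Theorems \ref{rearrange} and \ref{representation}), it reduces to trial functions of the form $f(x)=x_n^{-n/2^*}h(|Tx|)$ with $h$ decreasing on $[0,1]$ and $h(1)=0$. It then splits $h=h_1+h_0$ by truncation at a fixed radius $R$: the ``inner'' part $f_1$ is supported in a ball $B^R\subset\HN$ where $x_n$ is bounded above and below, so the unweighted convex-domain Sobolev inequality (Theorem \ref{sobolev}) applies after absorbing the bounded weight; the ``tail'' part $f_0$ is controlled via the layer-cake weighted inequality of Theorem \ref{weighted}, which also controls the cross term $h(R)$. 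The two bounds are then combined. If you want to salvage your approach, you would need to actually prove the weighted inequality on $\RR$; otherwise the argument must be built on some concrete substitute, as the paper does.
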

Alternatively, we write \eqref{hsm} as $\JH(f) \ge a_{n,\alpha}\|f\|_{2^*}^2$, where $\|\cdot\|_p$ refers to the $L^p$-norm with usual Lebesgue measure.

\section{Sobolev Inequalities}

Herein, we establish two Sobolev-type inequalities that we'll need in the proof of Theorem \ref{hsmthm}. We prove each for the more general $p$-case. The first inequality we prove is for $I_{\alpha,p}^{\Omega}$ with respect to convex sets $\Omega$.

\begin{thm}
\label{sobolev}
Let $p \geq 2$, $1<\alpha<\min\{n,p\}$, and let $\Omega \subseteq \RR$ be convex.  Then, for all $f \in C_c^{\infty}(\Omega)$, there exists $c_{n,p,\alpha}>0$ so that
\[
I_{\alpha,p}^{\Omega}(f) \geq c_{n,p,\alpha}\|f\|_{p^*}^p.
\]
\end{thm}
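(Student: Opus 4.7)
My plan is to reduce the inequality to the classical fractional Sobolev embedding on all of $\RR$. Since $f\in C_c^{\infty}(\Omega)$, it extends by zero to a function supported in $\overline{\Omega}$. Splitting the whole-space seminorm into the part over $\Omega\times\Omega$ and the part where at least one variable lies in $\Omega^c$, and using that $f$ vanishes on $\Omega^c$, gives
\[
I_{\alpha,p}^{\RR}(f)
=			I_{\alpha,p}^{\Omega}(f) + 2\int_{\Omega}|f(x)|^p\int_{\Omega^c}\frac{\ud y}{|x-y|^{n+\alpha}}\,\ud x.
\]
Thus it suffices to bound the exterior contribution by a constant times $I_{\alpha,p}^{\Omega}(f)$; combining with the standard fractional Sobolev embedding $I_{\alpha,p}^{\RR}(f)\ge c_{S}\|f\|_{p^*}^p$ on $\RR$ then yields the theorem.

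\textbf{Carrying out the estimate.} For fixed $x\in\Omega$, let $d(x)=\operatorname{dist}(x,\partial\Omega)$. The inclusion $\Omega^c\subseteq\{y:|x-y|\ge d(x)\}$ (which needs only $d(x)>0$), together with spherical coordinates, gives
\[
\int_{\Omega^c}\frac{\ud y}{|x-y|^{n+\alpha}}\le \frac{\omega_{n-1}}{\alpha\,d(x)^{\alpha}}.
\]
Hence the boundary term is controlled by a multiple of $\int_{\Omega}|f(x)|^p d(x)^{-\alpha}\,\ud x$. Now I would invoke the fractional Hardy inequality on convex sets due to Dyda \cite{D}: for $1<\alpha<p$ and $\Omega\subseteq\RR$ convex,
\[
I_{\alpha,p}^{\Omega}(f)\ge D_{n,p,\alpha}\int_{\Omega}\frac{|f(x)|^p}{d(x)^{\alpha}}\,\ud x,
\]
with constant depending only on $n,p,\alpha$. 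Combining the two displayed estimates yields $I_{\alpha,p}^{\RR}(f)\le C_{n,p,\alpha}\, I_{\alpha,p}^{\Omega}(f)$, and the theorem follows from the Sobolev embedding with $c_{n,p,\alpha}=c_S/C_{n,p,\alpha}$.

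\textbf{Main obstacle.} The whole argument hinges on invoking a fractional Hardy inequality valid for every convex $\Omega$ with a constant depending only on $n$, $p$, and $\alpha$; this is where convexity is used, essentially through a supporting-hyperplane comparison with a half-space at each point. The hypothesis $\alpha>1$ is exactly what the Hardy step requires in order to avoid the degenerate logarithmic case at $\alpha=1$, while $\alpha<n$ is needed for $p^*=np/(n-\alpha)$ to be the correct Sobolev exponent and $\alpha<p$ is needed in both the Hardy and Sobolev steps. Once the Hardy ingredient is in hand, the rest of the proof is a direct bookkeeping of the two estimates above.
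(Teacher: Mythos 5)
Your proposal follows essentially the same route as the paper: extend by zero, split $I_{\alpha,p}^{\RR}(f)$ into the $\Omega\times\Omega$ part plus twice the exterior contribution, bound $\int_{\Omega^c}|x-y|^{-n-\alpha}\ud y$ by $\tfrac{1}{\alpha}|\SN^{n-1}|d_{\Omega}(x)^{-\alpha}$, absorb the resulting weighted $L^p$ term via a fractional Hardy inequality on convex domains, and finish with the whole-space fractional Sobolev embedding. The bookkeeping is identical and the logic is sound.

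The one discrepancy is attribution of the key Hardy ingredient. You cite Dyda~\cite{D} for the fractional Hardy inequality on convex $\Omega$ with a constant depending only on $n,p,\alpha$; the paper cites Loss--Sloane~\cite{LS}. This matters: Dyda's 2004 result concerns bounded Lipschitz (and more general) domains, where the constant generally depends on geometric properties of $\Omega$; it does not yield a constant uniform over \emph{all} convex $\Omega\subseteq\RR$. The uniform-in-$\Omega$ statement for convex sets, obtained exactly through the supporting-hyperplane reduction to the half-space that you sketch in your ``main obstacle'' paragraph, is the content of~\cite{LS}. With that citation swapped in, your proof is the paper's proof.
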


\begin{proof}
In \cite{LS}, it is shown
\[
I_{\alpha,p}^{\Omega}(f)
\geq			D_{n,p,\alpha}\int_{\Omega}{|f(x)|^p d_{\Omega}(x)^{-\alpha} \ud x},
\]
for all $f \in C_c(\Omega)^{\infty}$, where $d_{\Omega}(x) = \mathrm{dist}(x,\partial\Omega)$. Further, there exists $S_{n,p, \alpha}$, see, \emph{e.g.}, \cite{AF}, Theorems 7.34, 7.47, so that 
\[
I_{\alpha,p}^{\RR}(f)
\geq			S_{n,p,\alpha}\|f\|_{p^*}^p,
\]
for all $f \in C_c^{\infty}(\RR)$. Thus, if $B(x,r)$ is the ball of radius $r$ centered at $x$, then since
\[\int_{\Omega^c}{|x-y|^{-n-\alpha} \ud y}\;
			\leq	\int_{B(x,d_{\Omega}(x))^c}{|x-y|^{-n-\alpha} \ud y}
			=			\tfrac{1}{\alpha}|\SN^{n-1}|d_{\Omega}(x)^{-\alpha},\]
we have
\begin{align*}
S_{n,p,\alpha}\|f\|_{p^*}^p
&\leq		I_{\alpha,p}^{\RR}(f)\\
&= 			I_{\alpha,p}^{\Omega}(f) + 2\int_{\Omega}{\ud x |f(x)|^p}\int_{\Omega^c}{\ud y |x-y|^{-n-\alpha}}\\
&\leq		I_{\alpha,p}^{\Omega}(f) + \tfrac{2}{\alpha}|\SN^{n-1}|\int_{\Omega}{|f(x)|^p d_{\Omega}(x)^{-\alpha} \ud x}\\
&\leq		\left(1 + \frac{2|\SN^{n-1}|}{\alpha D_{n,p, \alpha}}\right) I_{\alpha,p}^{\Omega}(f),
\end{align*}
where $|\SN^{n-1}| = \frac{2\pi^{n/2}}{\Gamma(n/2)}$ is the surface area of the sphere of radius 1 in $\RR$.
\end{proof}

The next inequality is a weighted inequality for the term $J_{\alpha,p}^{\HN}(f)$. We leave the proof to the appendix.

\begin{thm}
\label{weighted}
Let $p \geq 2$, $1<\alpha<\min\{n,p\}$. Then, there exists $d_{n,p,\alpha}>0$ so that
\[J_{\alpha,p}^{\HN}(f) \geq d_{n,p,\alpha}\left(\int_{\HN}{|f(x)|^q x_n^{-n+nq/{p^*}} \ud x}\right)^{p/q},\]
where $q=p\left(\frac{n+\frac{\alpha-1}{2}}{n-1}\right)$.
\end{thm}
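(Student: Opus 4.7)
My plan is to combine Theorem~\ref{sobolev} applied on small convex cubes with a dyadic cubical decomposition of $\HN$.

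First, I substitute $g(x) := x_n^{(1-\alpha)/p} f(x)$, so that
\[
J_{\alpha,p}^{\HN}(f) = \int_{\HN\times\HN} \frac{|g(x)-g(y)|^p}{|x-y|^{n+\alpha}}\,(x_n y_n)^{(\alpha-1)/2}\,dx\,dy,
\]
and, using the algebraic identity $q(n-1)/p = n + (\alpha-1)/2$ together with $np/p^* = n-\alpha$, the right-hand side of Theorem~\ref{weighted} becomes $\|g\|_{L^q(\HN,\,x_n^{(\alpha-1)/2}dx)}^p$. Both sides are now expressed symmetrically in $g$.

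Second, I cover $\HN$ by a bounded-overlap family of convex cubes $Q_{k,j}$ of side $2^k$ contained in the slab $S_k := \mathbb{R}^{n-1}\times[2^k, 2^{k+1})$, and choose a smooth partition of unity $\{\varphi_{k,j}\}$ subordinate to a slight enlargement $\widetilde Q_{k,j}$ of the cover. Applying Theorem~\ref{sobolev} to $\varphi_{k,j} g$ on the convex set $\widetilde Q_{k,j}$ gives $I_{\alpha,p}^{\widetilde Q_{k,j}}(\varphi_{k,j} g) \gtrsim \|g\|_{L^{p^*}(Q_{k,j})}^p$. Expanding via the elementary inequality $|\varphi g(x) - \varphi g(y)|^p \leq C(\varphi(x)^p|g(x) - g(y)|^p + |g(y)|^p|\varphi(x) - \varphi(y)|^p)$ and noting that $(x_n y_n)^{(\alpha-1)/2}\sim 2^{k(\alpha-1)}$ on $\widetilde Q_{k,j}\times\widetilde Q_{k,j}$, summing over $(k,j)$ yields
\[
\sum_{k,j} 2^{k(\alpha-1)}\,\|g\|_{L^{p^*}(Q_{k,j})}^p \;\lesssim\; J_{\alpha,p}^{\HN}(f) + \mathcal{E},
\]
where $\mathcal{E}$ collects the partition-of-unity commutator errors.

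Third, I pass from this dyadic sum to the weighted $L^q$-norm by cube-wise H\"older: $\|g\|_{L^q(Q_{k,j})} \leq \|g\|_{L^{p^*}(Q_{k,j})}|Q_{k,j}|^{1/q - 1/p^*}$ contributes a factor $2^{kn(1/q - 1/p^*)}$, which together with the weight $x_n^{(\alpha-1)/2}\sim 2^{k(\alpha-1)/2}$ on $Q_{k,j}$ and the subadditivity $(\sum a_i)^{p/q}\leq\sum a_i^{p/q}$ (valid since $p/q<1$) gives
\[
\|g\|_{L^q(\HN,\,x_n^{(\alpha-1)/2}dx)}^p \;\lesssim\; \sum_{k,j} 2^{k[p(\alpha-1)/(2q) + np(1/q-1/p^*)]}\,\|g\|_{L^{p^*}(Q_{k,j})}^p.
\]
A direct computation (using $np/p^* = n-\alpha$) shows that the exponent of $2^k$ simplifies to $\alpha-1$ precisely when $q = p(n+(\alpha-1)/2)/(n-1)$, which is the stated exponent; this matches the $J$-lower bound from step two and closes the inequality modulo~$\mathcal{E}$.

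The hardest step will be absorbing the commutator error $\mathcal{E}$: using $|\nabla\varphi_{k,j}|\lesssim 2^{-k}$ one finds $\mathcal{E}\lesssim \int_{\HN}|g|^p x_n^{-1}\,dx = \int_{\HN}|f|^p x_n^{-\alpha}\,dx$, which is precisely the Hardy term appearing in~\eqref{hardy}. Controlling this by $J_{\alpha,p}^{\HN}(f)$ alone requires either a sharper commutator estimate exploiting cancellation in $\varphi_{k,j}(x)-\varphi_{k,j}(y)$, or an argument that bypasses cutoffs altogether by identifying $J_{\alpha,p}^{\HN}$---via the Poincar\'e formula $|x-y|^2 = 4 x_n y_n \sinh^2(d_H(x,y)/2)$ and a suitable renormalisation of $g$---with a fractional Dirichlet energy on hyperbolic space, whereupon the required Sobolev embedding into $L^q$ follows from the standard spectral theory of the hyperbolic fractional Laplacian.
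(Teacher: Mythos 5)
Your substitution $g(x)=x_n^{(1-\alpha)/p}f(x)$ and the algebraic bookkeeping are correct, but the cube-and-cutoff strategy has the gap you flag in your last paragraph, and that gap is fatal. The commutator error from the partition of unity is of order $\mathcal{E}\sim\int_{\HN}|g|^p x_n^{-1}\ud x=\int_{\HN}|f|^p x_n^{-\alpha}\ud x$, the Hardy term. No constant $C>0$ can yield $\int_{\HN}|f|^p x_n^{-\alpha}\ud x\le C\,J_{\alpha,p}^{\HN}(f)$: plugging such a bound into \eqref{remainder} would give $I_{\alpha,p}^{\HN}(f)\ge\bigl(D_{n,p,\alpha}+c_p/C\bigr)\int_{\HN}|f|^p x_n^{-\alpha}\ud x$, strictly improving the Hardy constant and contradicting the sharpness of $D_{n,p,\alpha}$ proved in~\cite{FS2}. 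Neither remedy you sketch repairs this. The cancellation $\sum_j\bigl(\varphi_{k,j}(x)-\varphi_{k,j}(y)\bigr)=0$ on a slab does not help control $\sum_j|\varphi_{k,j}(x)-\varphi_{k,j}(y)|^p$, since the $p$-th power destroys sign cancellation. And while the hyperbolic reformulation is accurate (the kernel becomes $\sinh^{-(n+\alpha)}\bigl(d_H(x,y)/2\bigr)$ against the hyperbolic volume $x_n^{-n}\ud x$), the target norm is $\int_{\HN}|g|^q x_n^{n+(\alpha-1)/2}\,x_n^{-n}\ud x$, a weighted $L^q$-norm with a nonconstant weight in hyperbolic coordinates; this is not the standard Sobolev embedding for the hyperbolic fractional Laplacian, and would itself require a proof of comparable difficulty.

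The paper's own proof (in the Appendix) avoids cutoffs entirely. It is a layer-cake argument modeled on Theorem~4.3 of~\cite{LL}: after setting $g=x_n^{(1-\alpha)/p}f$ and expanding $|g(x)-g(y)|^p$ via $t^p=p(p-1)\int_0^\infty(t-a)_+a^{p-2}\ud a$ and $|x-y|^{-n-\alpha}$ via the layer-cake formula, one introduces the weighted distribution function $\lambda(a)=\int_{\HN}1_{\{g>a\}}\,x_n^{(\alpha-1)/2}\ud x$ and the two-variable quantity $u(a,c)$, lower-bounds $u(a,c)\ge D\,c^{n+(\alpha-1)/2}\lambda(a)$ using the monotonicity in $x_n$ (for $\alpha>1$) of the weighted measure of a ball, and then closes by Fubini, a scaling substitution in $c$, and the elementary bound $\lambda(b)\le b^{-q}\|g\|_{q(\mu)}^q$. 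This delivers the inequality directly, with no error terms to absorb, which is precisely what your decomposition cannot achieve.
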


Since the remainder of the paper deals with the case $p=2$, we write $c_{n,\alpha} = c_{n,2,\alpha}$ and $d_{n,\alpha} = d_{n,2,\alpha}$.

In the next section, we'll show we can minimize over a certain class of functions that are decreasing, albeit not symmetrically. The crux of the proof of Theorem \ref{hsmthm} in the following section is to decompose this function by truncation and use these two Sobolev inequalities to appropriately bound the $L^{2^*}$-norms of the resulting ``upper'' and ``lower'' functions.

\section{Class of Minimizing Functions}
In this section, we determine the properties of those functions that minimize our Rayleigh quotient
\[
\RQ(f)
:=		\frac{\IH(f) - D_{n,2,\alpha}\int_{\HN}{|f(x)|^2 x_n^{-\alpha} \ud x}}{\|f\|_{2^*}^2}
=			\frac{\JH(f)}{\|f\|_{2^*}^2}.
\]
To minimize this functional, we'd like to do a rearrangement, but we have the restriction that the function must have support in the upper halfspace. Still, we can rearrange the function along hyperplanes parallel to the boundary of $\HN$.

In addition, we consider the conformal transformation $T: B(\textbf{0},1) \rightarrow \HN$, where $\textbf{0}$ is the origin in $\RR$. If we write $\eta(\omega) = 2/\left(\left|\omega'\right|^2 + (\omega_n + 1)^2\right)$, where $\omega=(\omega',\omega_n)$, $\omega'\in\R^{n-1}$, $\omega_n\in\R$, then
\[T\omega		= \left(\frac{2\omega',1-|\omega|^2}{\left|\omega'\right|^2 + (\omega_n + 1)^2}\right)
						=	\eta(\omega)\left(\omega',\frac{1-|\omega|^2}{2}\right).\]
Note that $T$ is an involution, and its Jacobian is $\eta(\omega)^n$. See, \emph{e.g.}, Appendix, \cite{CL2}. We define
\[\ft(\omega) := \eta(\omega)^{n/{2^*}}f\left(T\omega\right),\]
and since $\eta(Tx)=1/\eta(x)$, then
\[f(x) = \eta(x)^{n/{2^*}}\ft(Tx).\]
Thus, if $\supp \ft \subseteq B(\textbf{0},1)$, then $\supp f \subseteq \HN$. Indeed, for any $0 < R < 1$, if $\supp \ft \subseteq B(\textbf{0},R)$, then $\supp f \subseteq	B^R$, where we define
\begin{equation}
\label{BR}
B^R :=		\left\{(x',x_n)\in\RR : |x'|^2 + \left(x_n - \frac{1+R^2}{1-R^2}\right)^2 \leq \left(\frac{2R}{1-R^2}\right)^2\right\},
\end{equation}
using $|Tx|^2 = \dfrac{|x'|^2+(x_n - 1)^2}{|x'|^2+(x_n + 1)^2}$.

We use these results throughout the remainder of this paper. These provide a new ``ball'' picture in which to consider our inequality and minimization problem. Among other things, we can also perform a rotation of $\ft$ on the ball. It turns out that repeated application of this rotation, along with the rerrangement mentioned above, result in a limiting function that is radial in the ball picture and whose Rayleigh quotient is always smaller than that of the original.

This first result follows from Carlen and Loss \cite{CL2}.
\begin{thm}
\label{rearrange}
Let $n \geq 2$, $f \in L^{2^*}(\HN)$. Then, there exists $F \in L^{2^*}(\HN)$ such that 
\begin{enumerate}
	\item	$\left\|f\right\|_{2^*} = \left\|F\right\|_{2^*}$,
	\item	$F$ is nonnegative, symmetric decreasing in hyperplanes parallel to the boundary of $\HN$,
	\item	$\widetilde{F}$ is rotationally symmetric, and
	\item	$\RQ(f) \geq \RQ(F)$.
\end{enumerate}
\end{thm}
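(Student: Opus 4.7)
The proof is a competing-symmetries argument in the style of Carlen--Loss \cite{CL2}. Two operations on $L^{2^*}(\HN)$ are in play: the Steiner rearrangement $S$, which first replaces $f$ by $|f|$ and then, for each $t>0$, replaces $f(\cdot,t)$ by its symmetric decreasing rearrangement on $\R^{n-1}$; and the conformal rotation $U_R$ defined by $\widetilde{U_R f}(\omega) = \widetilde{f}(R\omega)$ for a fixed rotation $R$ of $B(\textbf{0},1)$ that does not preserve the $\omega_n$-axis.

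First I would show $\RQ(Sf)\le \RQ(f)$. Expanding the numerator of $\JH(f)$ and dividing by the weights,
\[
\JH(f) = \int_{\HN\times\HN}\frac{x_n^{(1-\alpha)/2}y_n^{-(1-\alpha)/2}f(x)^2 + x_n^{-(1-\alpha)/2}y_n^{(1-\alpha)/2}f(y)^2 - 2f(x)f(y)}{|x-y|^{n+\alpha}}\,\ud x\,\ud y.
\]
Integration of $|x-y|^{-n-\alpha}$ in $y'$ produces a function of $x_n-y_n$ alone, so the two diagonal terms collapse to integrals of the form $\int f(x)^2 G(x_n)\,\ud x$ and are preserved by $x'$-rearrangement. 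The cross term $-2\int f(x)f(y)|x-y|^{-n-\alpha}\,\ud x\,\ud y$ decreases under Steiner symmetrization by the Riesz rearrangement inequality, since $|x-y|^{-n-\alpha}$ is itself symmetric decreasing in $|x'-y'|$ at fixed $(x_n,y_n)$. Combined with $\|Sf\|_{2^*}=\|f\|_{2^*}$, this yields $\RQ(Sf)\le\RQ(f)$.

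For $U_R$ the key input is the conformal invariance of $\JH$ under $T$. A change of variables $x=T\omega$, $y=T\zeta$ using $\ud x=\eta(\omega)^n\,\ud\omega$, $\eta(Tx)=1/\eta(x)$, and the conformal identity $|T\omega-T\zeta|^2 = \eta(\omega)\eta(\zeta)|\omega-\zeta|^2$, together with the conformal weight $\eta^{n/2^*}$ in the definition of $\widetilde{f}$ and the relation $2n/2^*=n-\alpha$, causes all $\eta$-factors to cancel and rewrites $\JH(f)$ as a manifestly rotation-invariant expression in $\widetilde{f}$ on $B(\textbf{0},1)$. (As a check, the Hardy piece $\int f^2/x_n^{\alpha}\,\ud x$ becomes a constant multiple of $\int \widetilde{f}^2/(1-|\omega|^2)^{\alpha}\,\ud\omega$, and the analogous computation reproduces $\IH(f)$ in ball coordinates.) Hence $\JH(U_R f)=\JH(f)$ and $\|U_R f\|_{2^*}=\|f\|_{2^*}$.

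Finally, I iterate: set $f_0=|f|$ and $f_{k+1}=S(U_R f_k)$. Every step preserves the $L^{2^*}$ norm and does not increase $\RQ$. The competing-symmetries compactness theorem of \cite{CL2} provides $L^{2^*}$-convergence along a subsequence to a limit $F$ fixed by both $S$ and $U_R$. The $S$-fixed-point condition is exactly property (2); combined with $U_R$-invariance, the two rotational subgroups of $SO(n)$ so generated close up to all of $SO(n)$, giving property (3) (from which (2) reappears automatically). Lower semicontinuity of $\JH$ along the iteration gives (4), and (1) is immediate. The main obstacle will be establishing the $L^{2^*}$-compactness of the iteration together with the lower semicontinuity of $\JH$ in $L^{2^*}$; the conformal invariance of $\JH$, while essential, is a routine $\eta$-computation, and the Steiner step is standard rearrangement.
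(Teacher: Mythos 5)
Your proposal has the same high-level architecture as the paper's proof: the two competing operations (Steiner rearrangement in hyperplanes $\{x_n=\mathrm{const}\}$, and a conformal rotation on $B(\mathbf{0},1)$), iteration, the Carlen--Loss compactness theorem, and a lower-semicontinuity/Fatou step to pass to the limit. What differs — and where the gap is — is how you establish the two invariance/monotonicity facts. The paper does not attack $\JH$ directly: it first invokes the identity \eqref{decomp}, $\JH(f)=I_{\alpha}^{\RR}(f)-c\int_{\HN}f^2 x_n^{-\alpha}\,\ud x$, and then shows $I_{\alpha}^{\RR}$ decreases under the Steiner step (a modification of \cite{LL}, Theorem 7.17) and is conformally invariant (Lemma \ref{lemma}), while the Hardy term is separately invariant under both operations. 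This detour is not cosmetic.

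The real problem is your treatment of the $U_R$ step. You assert that after the change of variables $x=T\omega$, $y=T\zeta$ "all $\eta$-factors cancel" and $\JH(f)$ becomes a manifestly rotation-invariant expression in $\widetilde{f}$. This is false. The cancellation does occur in the cross term: writing $g(x)=x_n^{(1-\alpha)/2}f(x)$, the term $-2\int\!\!\int g(x)g(y)|x-y|^{-n-\alpha}\dvx\dvy = -2\int\!\!\int f(x)f(y)|x-y|^{-n-\alpha}\ud x\ud y$ does transform to $-2\int\!\!\int\widetilde{f}(\omega)\widetilde{f}(\zeta)|\omega-\zeta|^{-n-\alpha}\ud\omega\ud\zeta$ (the exponent of $\eta(\omega)\eta(\zeta)$ is $-(n-\alpha)/2-(n+\alpha)/2+n=0$). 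But the two diagonal pieces carry the factors $(\eta(\omega)/\eta(\zeta))^{\pm(n-1)/2}$, and since $\eta(\omega)=2/(|\omega'|^2+(\omega_n+1)^2)$ is \emph{not} radial, they are not rotation-invariant. Worse, each diagonal piece is individually divergent (the inner $y$-integral $\int_{\HN} y_n^{(\alpha-1)/2}|x-y|^{-n-\alpha}\,\ud y$ blows up at $y=x$), so the decomposition into "diagonal plus cross" that you rely on in both the $S$-step and the $U_R$-step is only formal. Establishing that the non-radial, divergent diagonal contributions collapse correctly is precisely the content of the paper's Lemma \ref{lemma}, which requires introducing the sets $A_\epsilon$, a symmetrization of the $t$-integral about $t=1$, and a limiting argument — not a routine $\eta$-computation. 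This is the main thing you have underestimated; the $S$-step issue is milder, since the kernel-truncation argument behind \cite{LL} Theorem 7.17 is standard and you gesture at the right Riesz-rearrangement mechanism, but you should still phrase it via a regularized kernel and a limit rather than expanding a divergent integral.

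Finally, one small point on the limit step: the paper does not need full $L^{2^*}$-lower-semicontinuity of $\JH$; it passes to an a.e.-convergent subsequence of $F_k$ and applies Fatou's lemma to the nonnegative integrand of $\JH$, which is lighter than what you advertise as "the main obstacle."
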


\begin{proof}
Since $|f(x)-f(y)| \geq \Bigl||f|(x)-|f|(y)\Bigr|$ implies $\IH(f) \geq \IH(|f|)$, we can assume $f$ is nonnegative. Then, the first three items of this theorem are a direct result of \cite{CL2}, Theorem 2.4. Indeed, let $Uf$ be the transformation of $f$ obtained by a certain fixed rotation of $\ft$, as described in \cite{CL2}. In particular, using the rotation
$\textbf{R}:(x_1,\ldots,x_{n-1},x_n) \mapsto (x_1,\ldots,x_n, -x_{n-1})$, $x_i \in \R, i = 1,\ldots,n$,
then $U$ maps
\[f(x) \mapsto \ft(x) \mapsto \ft(\textbf{R}x) \mapsto \eta(x)^{n/{2^*}}\ft(\textbf{R}Tx).\]
Note how the last transformation mimics the map $\ft \mapsto f$. Further, let $V$ be the symmetric decreasing rearrangement in hyperplanes parallel to the boundary of $\HN$. Define $F_k := (VU)^{k}f$, then, by Theorem 2.4 in \cite{CL2}, there exists $F \in L^{2^*}(\HN)$ that is nonnegative and symmetric decreasing in hyperplanes parallel to the boundary of $\HN$ and such that $\left\|f\right\|_{2^*} = \left\|F\right\|_{2^*}$, $\widetilde{F}$ is radial on the unit ball, and $\lim_{k\rightarrow\infty} F_k = F$ in $L^{2^*}(\HN)$. By passing to a subsequence, we can assume, without loss of generality, $F_k \rightarrow F$ almost everywhere.

As was calculated in \cite{BD}, there exists a constant $c > 0$ so that we can write the remainder term as
\begin{equation}
\label{decomp}
\JH(f) = I_{\alpha}^{\RR}(f) - c\int_{\HN}{\frac{|f(x)|^2}{x_n^{\alpha}}\ud x}.
\end{equation}	
We claim $\RQ(F_k)$ is decreasing. As $F_0 = f$, it is enough to show that $\RQ(f) \geq \RQ(VUf)$. 

By a modification of Theorem 7.17 of \cite{LL}, $I_{\alpha}^{\RR}(f)$ decreases under the rearrangement $V$. From Lemma \ref{lemma} in the Appendix, $I_{\alpha}^{\RR}(f) = I_{\alpha}^{\RR}(\ft)$, the latter of which is invariant under rotations. Hence, $I_{\alpha}^{\RR}(f)$ is invariant under $U$. Next, as the rearrangement under $V$ is only along hyperplanes parallel to the boundary of $\HN$ (\emph{i.e.}, where $x_n$ is fixed), the integral $\int_{\HN}{f^2(x) x_n^{-\alpha}\ud x}$ must be invariant under $V$, and since 
\[
\int_{\HN}{f^2(x) x_n^{-\alpha}\ud x}
=			\int_{B(\textbf{0},1)}{\left(\frac{2}{1-|\omega|^2}\right)^{\alpha} \ft^2(\omega)\ud \omega},
\]
it is invariant under $U$ as well. Finally, the $L^{2^*}$-norm is clearly invariant under $U$ and $V$.

Therefore, applying Fatou's lemma,
\[
\RQ(f)
=			\frac{\JH(f)}{\|f\|_{2^*}^2}
=			\frac{\JH(F_0)}{\|F\|_{2^*}^2}
\geq	\lim_{k \rightarrow \infty}\frac{\JH(F_k)}{\|F\|_{2^*}^2}
\geq	\frac{\JH(F)}{\|F\|_{2^*}^2}
=			\RQ(F).\qedhere
\]
\end{proof}

As a result, we can explicitly write the limiting function in Theorem \ref{rearrange} as the product of two radial functions in the ball picture, a specific, known symmetrically increasing function and a symmetrically decreasing function.

\begin{thm}
\label{representation}
Let $n \ge 2$, $F \in L^{2^*}(\HN)$, where $F$ is nonnegative, symmetric decreasing in hyperplanes parallel to the boundary of $\HN$, and $\widetilde{F}$ is rotationally symmetric. Then, there exists a decreasing function $h: [0,1] \rightarrow [0,\infty]$, where $h(1) = 0$, so that
\[
\widetilde{F}(\omega)
=			\left(\frac{2}{1-|\omega|^2}\right)^{n/{2^*}} h(|\omega|).
\]
\end{thm}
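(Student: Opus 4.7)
The plan is to read off the definition of $h$ directly from the target formula, and then verify the three properties (nonnegativity, monotonicity, and vanishing at $r=1$) separately. Writing $T\omega = (x',x_n)$ and recalling that $T\omega = \eta(\omega)(\omega',(1-|\omega|^2)/2)$ gives the identity $x_n = \eta(\omega)(1-|\omega|^2)/2$, which lets one rewrite
\[
\widetilde{F}(\omega) = \eta(\omega)^{n/{2^*}}F(T\omega) = \left(\frac{2}{1-|\omega|^2}\right)^{n/{2^*}}x_n^{n/{2^*}}F(T\omega).
\]
This suggests setting $h(|\omega|) := x_n^{n/{2^*}}F(T\omega)$; the rotational symmetry of $\widetilde{F}$ on the ball guarantees that the right-hand side is constant on each sphere $\{|\omega|=r\}$, so $h$ is a well-defined nonnegative function on $[0,1)$ and the claimed representation holds automatically on $B(\textbf{0},1)$.

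For monotonicity, the trick is to exploit the rotational freedom on the ball side to arrange that $T\omega_1$ and $T\omega_2$ both lie in a common hyperplane of $\HN$, so the symmetric decreasing property of $F$ on that hyperplane applies directly. Given $0 \le r_1 < r_2 < 1$, I would choose each $\omega_i$ so that $|\omega_i|=r_i$ and $T\omega_i$ lies in the hyperplane $\{x_n = 1\}$. A short computation shows this forces $(\omega_i)_n = -r_i^2$ and $|\omega_i'| = r_i\sqrt{1-r_i^2}$, hence $\eta(\omega_i) = 2/(1-r_i^2)$ and $|x_i'| = \eta(\omega_i)|\omega_i'| = 2r_i/\sqrt{1-r_i^2}$, a strictly increasing function of $r_i$. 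Since $F(\cdot,1)$ is symmetric decreasing on $\R^{n-1}$ by hypothesis,
\[
h(r_1) = F(x_1',1) \ge F(x_2',1) = h(r_2).
\]

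For the boundary value $h(1)=0$, the change of variables $x = T\omega$ (with Jacobian $\eta(\omega)^n$) combined with the definition of $\widetilde{F}$ gives the conformal identity $\int_{B(\textbf{0},1)}\widetilde{F}(\omega)^{2^*}\ud\omega = \int_{\HN}F(x)^{2^*}\ud x < \infty$. Substituting the representation and passing to polar coordinates in the ball reduces this to the finiteness of $\int_0^1 h(r)^{2^*}(1-r^2)^{-n}r^{n-1}\ud r$. Since $h$ is nonincreasing while the weight $(1-r^2)^{-n}$ is non-integrable near $r=1$, any positive limit of $h(r)$ at $r=1$ would force the integral to diverge; hence $h(r) \to 0$ as $r \to 1^-$, and we simply set $h(1)=0$. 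The main obstacle is purely bookkeeping: one has to keep careful track of the action of the involution $T$ on the two systems of coordinates and verify the supporting algebraic identities for $\eta$ and $T$, after which the argument is essentially a clean combination of the two symmetry hypotheses.
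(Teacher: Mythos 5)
Your proposal is correct and follows essentially the same route as the paper: both exploit that the hyperplane $\{x_n=1\}$ maps under $T$ to a sphere inside the ball, compute $\omega_n=-r^2$, $|\omega'|=r\sqrt{1-r^2}$, $|x'|=2r/\sqrt{1-r^2}$, and then combine the radiality of $\widetilde{F}$ with the symmetric-decreasing property of $F(\cdot,1)$ to produce a decreasing $h$. The only stylistic difference is that you define $h$ abstractly via rotational invariance and then identify it on the sphere, whereas the paper writes the formula $h(r)=F\bigl(2r/\sqrt{1-r^2},1\bigr)$ directly, and your integrability argument for $h(1)=0$ spells out what the paper asserts in one line.
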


\begin{proof}
Let $x \in \HN$ such that $x_n = 1$, and, recalling that $T$ is an involution, let $\omega = Tx$. Thus, if we restrict $T$ to the hyperplane
\[
H =	\{(x',x_n) \in \HN : x'\in\R^{n-1}, x_n = 1\},
\]
then its image, or stereographic projection, is the sphere
\[
S = \left\{\omega=(\omega',\omega_n):\omega'\in\R^{n-1}, \omega_n\in\R, |\omega'|^2 + \left(\omega_n + \tfrac{1}{2}\right)^2 = \tfrac{1}{4}\right\}
\]
whose north and south poles pass through the origin and the point $(0,\ldots,0,-1)$, respectively. Thus, for all $\omega \in S$, we have that $\omega_n = -|\omega|^2$, and $\eta(\omega) =  2/(1-|\omega|^2)$. Further,
\[
\frac{2}{1-|\omega|^2}
=			\eta(\omega)
=			\eta(Tx)
=			\frac{1}{\eta(x)}
=			\frac{|x'|^2+4}{2},
\]
as $x \in H$. Then, $|x'|^2 = 4|\omega|^2/(1-|\omega|^2)$, and, since $F$ is radial on $H$, then, for all $\omega \in S$,
\[
F(T\omega)
=			F(x',1)
=			F\left(|x'|,1\right)
=			F\left(\frac{2|\omega|}{\sqrt{1-|\omega|^2}},1\right).
\]
Further, as $\widetilde{F}(\omega) = \eta(\omega)^{n/{2^*}} F(T\omega)$, then
\[
\widetilde{F}(\omega) = \left(\frac{2}{1-|\omega|^2}\right)^{n/{2^*}}\mspace{-18.0mu}F\left(\frac{2|\omega|}{\sqrt{1-|\omega|^2}},1\right)
											=	\left(\frac{2}{1-|\omega|^2}\right)^{n/{2^*}}\mspace{-18.0mu}h(|\omega|),
\]
where $h(r) = F\left(\frac{2r}{\sqrt{1-r^2}},1\right)$ is a decreasing function and $h(1) = 0$, as $F$ is radially symmetrically decreasing on $H$, and $F \in L^{2^*}(\HN)$.

Note that for each particular radius in the unit ball, the corresponding sphere intersects $S$. This radius then corresponds to a particular radius in $H$ as given by $|x'|^2 = 4|\omega|^2/(1-|\omega|^2)$. But, on the ball, $\widetilde{F}$ is a radial function, so if $\omega$ is any point in the unit ball, there exists some rotation $R_{\omega}$ and $\omega_S \in S$ so that $\omega = R_{\omega}\omega_S$. Therefore,
\[\widetilde{F}(\omega) = \widetilde{F}(R_{\omega}\omega_S) 
												\left(\frac{2}{1-|R_{\omega}\omega_S|^2}\right)^{n/{2^*}}\mspace{-18.0mu}h(|R_{\omega}\omega_S|)
												= \left(\frac{2}{1-|\omega|^2}\right)^{n/{2^*}}\mspace{-18.0mu}h(|\omega|),\]
for all $\omega \in B(\textbf{0},1)$.
\end{proof}

\section{Proof of Main Result}
\noindent\textit{Proof of Theorem \ref{hsmthm}:} From Theorems \ref{rearrange} and \ref{representation}, we can assume $\ft(\omega) = \left(\frac{2}{1-|\omega|^2}\right)^{n/{2^*}}h(|\omega|)$, where $h(r)$ is a decreasing function on $[0,1]$ and $h(1)=0$. Then,
\[
f(x)
=			\eta(x)^{n/{2^*}}\ft(Tx)
=			x_n^{-n/{2^*}}h(|Tx|),
\]
and $I_{\alpha}^{\RR}(f), \|f\|_{2^*}^2 < \infty$. However, we note that $f$ is no longer necessarily in $C_c^{\infty}(\HN)$.

We decompose $h = h_1 + h_0$ by truncation, by fixing $R \in (0,1)$ so $h_0(r) = \min\{h(r),h(R)\}$. Then, $f = f_1 + f_0$, with the definitions $f_1, f_0$ following from the above. We claim there exists $c,d > 0$, each dependent on $R,n$ and $\alpha$, such that
\begin{equation}
\label{claim1}
\JH(f) \geq c \left\|f_1\right\|_{2^*}^2,
\end{equation}
and
\begin{equation}
\label{claim2}
\JH(f) \geq d \left\|f_0\right\|_{2^*}^2.
\end{equation}
Then, using the triangle and arithmetic-geometric mean inequalities, for all $0<\lambda<1$, we obtain
\[\JH(f)	\geq \lambda c\left\|f_1\right\|_{2^*}^2 + (1-\lambda) d\left\|f_0\right\|_{2^*}^2
					\geq \tfrac{1}{2}\min\{\lambda c, (1-\lambda)d\}\left\|f\right\|_{2^*}^2.\]
Clearly, by fixing $\lambda, R$ not equal to  zero or one, the constant is greater than zero.  So, by taking the supremum over $\lambda$ and $R$, the result follows.

First, we prove \eqref{claim1}. Note that $\supp h_1 \subseteq [0,R]$, so $\supp f_1 \subseteq B^R$, where $B^R$ is as in \eqref{BR} above. Thus, for all $x,y \in B^R$,
\begin{align*}
\left|x_n^{(1-\alpha)/2}f_1(x) - y_n^{(1-\alpha)/2}f_1(y)\right|^2
&=		\left|x_n^{\frac{1-n}{2}}\left(h(|Tx|)-h(R)\right) - y_n^{\frac{1-n}{2}}\left(h(|Ty|)-h(R)\right)\right|^2\\
&=		\left|\left(x_n^{\frac{1-n}{2}}h(|Tx|) - y_n^{\frac{1-n}{2}}h(|Ty|)\right) + h(R)\left(y_n^{\frac{1-n}{2}} - x_n^{\frac{1-n}{2}}\right)\right|^2\\
&\leq	2\left|x_n^{(1-\alpha)/2}f(x) - y_n^{(1-\alpha)/2}f(y)\right|^2 + 2h^2(R)\left|y_n^{\frac{1-n}{2}} - x_n^{\frac{1-n}{2}}\right|^2.
\end{align*}
It is easy to see that for any $0<R<1$,
\[
A_1		= \int_{B^R\times B^R}{\frac{\left|y_n^{(1-n)/2}-x_n^{(1-n)/2}\right|^2}{|x-y|^{n+\alpha}}
				x_n^{\frac{\alpha-1}{2}}y_n^{\frac{\alpha-1}{2}} \ud x \ud y}
			<	\infty,
\]
where $A_1$ is dependent on $R, n$ and $\alpha$. Thus, $J_{\alpha}^{B^R}(f) + A_1 h^2(R) \geq \frac{1}{2}J_{\alpha}^{B^R}(f_1)$. We claim we can apply Theorem \ref{sobolev} to $x_n^{\frac{1-\alpha}{2}}f_1(x)$. Hence, if $x \in B^R$, then $\frac{1-R}{1+R} \leq x_n \leq \frac{1+R}{1-R}$, and
\begin{align*}
J_{\alpha}^{B^R}(f_1)
&\geq		\left(\frac{1-R}{1+R}\right)^{\alpha - 1} I_{\alpha}^{B^R}\left(x_n^{\frac{1-\alpha}{2}}f_1(x)\right)\\
&\geq		c_{n,\alpha}\left(\frac{1-R}{1+R}\right)^{\alpha - 1}
				\left(\int_{B^R}{x_n^{\left(\frac{1-\alpha}{2}\right){2^*}}\left|f_1(x)\right|^{2^*} \ud x}\right)^{2/{2^*}}\\
&\geq		c_{n,\alpha}\left(\frac{1-R}{1+R}\right)^{2\alpha - 2} \left\|f_1\right\|_{2^*}^2.
\end{align*}
Using Theorem \ref{weighted},
\[\JH(f)
\geq		d_{n,\alpha}\left(\int_{\HN}{\bigl|h(|Tx|)\bigr|^{q} x_n^{-n} \ud x}\right)^{2/{q}}
\geq		 d_{n,\alpha} h^2(R) \left(\int_{B^R}{x_n^{-n} \ud x}\right)^{2/{q}}
=				 A_2 h^2(R),\]
where $A_2$ is also dependent on $R,n$ and $\alpha$. Therefore,
\[\left(1+\frac{A_1}{A_2}\right)\JH(f)
		\geq		J_{\alpha}^{B^R}(f) + A_1 h^2(R)
		\geq		\tfrac{1}{2} c_{n,\alpha}\left(\frac{1-R}{1+R}\right)^{2\alpha - 2} \left\|f_1\right\|_{2^*}^2,\]
which proves \eqref{claim1}.
 
In establishing \eqref{claim2}, we use the inequality
$\frac{1}{2(n-1)} \leq \frac{(1-S^2)^{n-1}}{S^n}\int_0^S{\frac{r^{n-1}}{(1-r^2)^n} \ud r} \leq \frac{1}{n-1}$, $0<S<1$. Note that $h_0$ is constant on $[0,R]$, while it is decreasing to zero on $[R,1]$. The following establishes how fast $h_0$ vanishes at 1. From Theorem \ref{weighted},
\begin{align*}
\JH(f)
&\geq		d_{n,\alpha}\left(\int_{\HN}{\bigl|h_0(|Tx|)\bigr|^{q} x_n^{-n} \ud x}\right)^{2/{q}}\\
&=			d_{n,\alpha} \left(2^n |\SN^{n-1}|\int_0^1{\frac{r^{n-1}}{(1-r^2)^n} h_0(r)^{q} \ud r}\right)^{2/{q}}\\
&\geq		d_{n,\alpha} h_0(S)^2 \left(\frac{2^{n-1}}{n-1}|\SN^{n-1}|\right)^{2/{q}}\left(\frac{S^n}{(1-S^2)^{n-1}}\right)^{2/{q}},
\end{align*}
where $0<S<1$. Thus,
\[
h_0(r)^{2^*}	\leq	d_{n,\alpha}^{-{2^*}/2}\left(\frac{2^{n-1}}{n-1}|\SN^{n-1}|\right)^{-{2^*}/q}
										\left(\frac{(1-r^2)^{n-1}}{r^n}\right)^{{2^*}/q}\JH(f)^{{2^*}/2}.
\]
Then, we calculate
\begin{align*}
\left\|f_0\right\|_{2^*}^{2^*}
&=			2^n |\SN^{n-1}|\int_0^1{\frac{r^{n-1}}{(1-r^2)^n} h_0(r)^{2^*} \ud r}\\
&=			2^n |\SN^{n-1}| \left(h_0(R)^{2^*} \int_0^R{\frac{r^{n-1}}{(1-r^2)^n} \ud r} + \int_R^1{\frac{r^{n-1}}{(1-r^2)^n} h_0(r)^{2^*} \ud r}\right)\\
&\leq		2^n |\SN^{n-1}|d_{n,\alpha}^{-{2^*}/2}\left(\frac{2^{n-1}}{n-1}|\SN^{n-1}|\right)^{-{2^*}/q}
						\left(\left(\frac{(1-R^2)^{n-1}}{R^n}\right)^{{2^*}/q}\frac{1}{n-1}\frac{R^n}{(1-R^2)^{n-1}} \right.\\
&\qquad	\left. + \int_R^1{\frac{r^{n-1}}{(1-r^2)^n} \left(\frac{(1-r^2)^{n-1}}{r^n}\right)^{{2^*}/q}\ud r}\right)\JH(f)^{{2^*}/2}.
\end{align*}
As ${2^*} > q$, the claim follows.

Finally, we show that we can approximate $x_n^{\frac{1-\alpha}{2}}f_1(x)$ by functions in $C_c^{\infty}(B^R)$. Define
\[g_c(x) = \max\{x_n^{\frac{1-\alpha}{2}}f_1(x) - c, 0\}\]
almost everywhere. Then, by monotone convergence,
\[I_{\alpha}^{B^R}(g_c) \rightarrow I_{\alpha}^{B^R}\left(x_n^{\frac{1-\alpha}{2}}f_1(x)\right)\]
and
\[\|g_c\|_{2^*}^2 \rightarrow \left\|x_n^{\frac{1-\alpha}{2}}f_1(x)\right\|_{2^*}^2\]
as $c \rightarrow 0$. Now, $\supp g_c \subsetneq B^R$ and $g_c \in L^2(B^R)$, so $I_{\alpha}^{\RR}(g_c) < \infty$ from \eqref{decomp}. Denote
\[\|\cdot\|_{W^{\alpha/2,2}(\RR)} = \sqrt{\|\cdot\|_2^2+I_{\alpha}^{\RR}(\cdot)},\]
and let $W_0^{\alpha/2,2}(\RR)$ be the completion of $C_c^{\infty}(\RR)$ with respect to $\|\cdot\|_{W^{\alpha/2,2}(\RR)}$. Then it is known that
\[
W_0^{\alpha/2,2}(\RR)
=			W^{\alpha/2,2}(\RR) = \left\{ u \in L^2(\RR) : \|u\|_{W^{\alpha/2,2}(\RR)} < \infty \right\},
\]
see, \emph{e.g.} \cite{AF}, \cite{BBC}. Since $\supp g_c \subsetneq B^R$, there exists a sequence $\{g_c^j\} \subset C_c^{\infty}(B^R)$ so that $\|g_c - g_c^j\|_{W^{\alpha/2,2}(\RR)} \rightarrow 0$ as $j \rightarrow \infty$. Hence, $I_{\alpha}^{B^R}(g_c^j) \rightarrow I_{\alpha}^{B^R}(g_c)$ and $\|g_c^j\|_2^2 \rightarrow \|g_c\|_2^2$ as $j \rightarrow \infty$.
\qed

\section{Conclusion}
Consider the general Hardy-Sobolev-Maz'ya inequality
\[
\int_{\HN\times\HN}{\frac{|f(x)-f(y)|^p}{|x-y|^{n+\alpha}}\ud x \ud y} - D_{n,p,\alpha}\int_{\HN}{\frac{|f(x)|^p}{x_n^{\alpha}} \ud x}
\geq			a_{n,p,\alpha}\left(\int_{\HN}{|f(x)|^{p^*} \ud x}\right)^{p/{p^*}},
\]
where $p \ge 2$, $1<\alpha<\min\{n,p\}$. It is still unknown whether $a_{n,p,\alpha} > 0$ for $p>2$. Still, other than Theorem \ref{rearrange}, the elements of this paper have either already been proven for general $p$ or extend quite easily. Indeed, Theorem \ref{rearrange} cannot be extended because Lemma \ref{lemma} is not true for $p \ne 2$. However, for functions that are symmetrically decreasing about a point in the upper halfspace or that can be rerranged about a point in the upper halfspace, while still maintaining support there, and where the Hardy term increases due to the rearrangement, then a fixed $a_{n,p,\alpha} > 0$ can be found.

\smallskip
\noindent \textbf{Acknowledgement.} I am very thankful to Michael Loss for countless valuable discussions and especially for collaboration on Theorem \ref{weighted}. This work was partially supported by NSF Grant DMS 0901304.

\section{Appendix}
\noindent\textit{Proof of Theorem \ref{weighted}:}
This proof uses the idea from Theorem 4.3, \cite{LL} to write the integral in terms of its layer cake representation . We can assume that $f\geq0$, since
\[
\JH(f)
=			\IH(f) - 2\kappa_{n,\alpha}\int_{\HN}{\frac{|f(x)|^2}{x_n^{\alpha}}\ud x}\\
\geq	\IH(|f|) - 2\kappa_{n,\alpha}\int_{\HN}{\frac{|f|(x)^2}{x_n^{\alpha}}\ud x}\\
=			\JH(|f|).
\]
We need a few preliminary results. Let $1_{\Omega}$ be the indicator function on the set $\Omega$, then, for any $s\in\R$,
\[
\int_0^{\infty}{st^{-s-1} 1_{\left\{|x|<t\right\}}\ud t}
=			\int_{|x|}^{\infty}{st^{-s-1}\ud t} = |x|^{-s}.
\]
These next results follow from the Appendix in \cite{CL2}. Let $t \ge 0$, so $t^p = {p(p-1)\int_0^{\infty}(t-a)_+ a^{p-2} \ud a}$. Then, letting $a \ge 0$,
\begin{align*}
|g(x)-g(y)|^p
&=			p(p-1)\int_0^{\infty}{\left(|g(x)-g(y)|-a\right)_+ a^{p-2} \ud a}\\
&=			p(p-1)\int_0^{\infty}{\bigl[\left(g(x)-g(y)-a\right)_+ + \left(g(y)-g(x)-a\right)_+\bigr] a^{p-2} \ud a}\\
&=			p(p-1)\int_0^{\infty}{\ud a \;a^{p-2}}\int_0^{\infty}{\ud b \left(1_{\{g(x)-a>b\}}1_{\{g(y)<b\}}+1_{\{g(y)-a>b\}}1_{\{g(x)<b\}}\right)},
\end{align*}
where $1_A$ is the indicator function on $A$. Let $g(x) = x_n^{\frac{1-\alpha}{p}}f(x)$. Then, where $d_{n,p,\alpha}$ is a generic constant, and using the results above,
\begin{align*}
J_{\alpha,p}^{\HN}(f)
&=			\int_{\HN\times\HN}{\frac{|g(x)-g(y)|^p}{|x-y|^{n+\alpha}} \dvx \dvy}\\
&=			d_{n,p,\alpha} \int_{\HN\times\HN}{\dvx \dvy} \inti{\frac{\ud c}{c} \;c^{-n-\alpha} 1_{\{|x-y|<c\}}} \inti{\ud a \;a^{p-2}}
				\inti{\ud b} \Bigl(1_{\{g(x)>a+b\}}1_{\{g(y)<b\}}\Bigr.\\
&\mspace{600.0mu}	\Bigl. +1_{\{g(y)>a+b\}}1_{\{g(x)<b\}}\Bigr)\\
&=			d_{n,p,\alpha} \mspace{-9.0mu}\int_{\HN\times\HN}{\dvx \dvy} \inti{\frac{\ud c}{c} \;c^{-n-\alpha}} \inti{\ud a \;a^{p-2}}
				\inti{\ud b  \;1_{\{|x-y|<c\}}\left(1-1_{\{g(y) \ge b\}}\right)1_{\{g(x)>a+b\}}}.
\end{align*}
We write
\[\lambda(a) = \int_{\HN}{1_{\{g(x)>a\}} \dvx},\]
and
\[u(a,c) = \int_{\HN\times\HN}{1_{\{g(x)>a\}} 1_{\{|x-y|<c\}} \dvx \dvy},\]
so $u(a,c) \ge u(b,c)$, $\lambda(a) \ge \lambda(b)$ if $b \ge a$. Further, since $\alpha > 1$, we obtain
\begin{align*}
u(a,c)
&=			\int_{\HN}{\dvx 1_{\left\{g(x)>a\right\}}} \int_{\HN}{\dvy 1_{\left\{|x-y|<c\right\}}}\\
&\geq		\int_{\HN}{\dvx 1_{\left\{g(x)>a\right\}}} \int_{\HN}{\dvy 1_{\left\{|y|<c\right\}}}\\
&=			Dc^{n+\frac{\alpha-1}{2}} \lambda(a),
\end{align*}
where $D=\int_{\HN}{1_{\left\{|y|<1\right\}} \dvy}$. Using Fubini,
\pagebreak[0]
\begin{align*}
J_{\alpha,p}^{\HN}
&=			d_{n,p,\alpha} \inti{\ud a \;a^{p-2}} \inti{\ud b} \inti{\frac{\ud c}{c} \;c^{-n-\alpha}}
				\int_{\HN\times\HN}{\dvx \dvy} \Bigl(1_{\{|x-y|<c\}} 1_{\{g(x)>a+b\}}\Bigr.\\
&\mspace{540.0mu} \Bigl.- 1_{\{|x-y|<c\}} 1_{\{g(x)>a+b\}} 1_{\{g(y) \ge b\}}\Bigr)\\
&\ge		d_{n,p,\alpha} \inti{\ud a \;a^{p-2}} \inti{\ud b} \inti{\frac{\ud c}{c} \;c^{-n-\alpha}}
				\Bigl(u(a+b,c) - \min\{u(a+b,c),u(b,c),\lambda(a+b)\lambda(b)\}\Bigr)\\
&\ge			d_{n,p,\alpha} \inti{\ud a \;a^{p-2}} \inti{\ud b} \inti{\frac{\ud c}{c} \;c^{-n-\alpha}}
				\lambda(a+b)\left(Dc^{n+\frac{\alpha-1}{2}}-\lambda(b)\right)_+.
\end{align*}
From \cite{LL}, Theorem 1.13, $g^q(x)=\inti{qa^{q-1}1_{\{g(x)>a\}} \ud a}$. Thus, we denote
\[\|g\|_{q(\mu)}^q = \int_{\HN}{|g(x)|^q \dvx} = \inti{qa^{q-1}\lambda(a) \ud a} \ge \lambda(b) b^q.\]
Using the substitution $c = \left(\frac{\lambda(b)}{D}\right)^{\frac{2}{2n+\alpha-1}}t$, and the identity $1-\frac{p}{q} = \frac{\alpha+1}{2n+\alpha-1}$, then
\begin{align*}
J_{\alpha,p}^{\HN}(f)
&\ge		d_{n,p,\alpha} \inti{\ud a \;a^{p-2}} \inti{\ud b \;\lambda(a+b)\lambda(b)^{-\frac{\alpha+1}{2n+\alpha+1}}}
				\int_1^{\infty}{\ud t \;t^{-n-\alpha-1}\left(t^{n+\frac{\alpha-1}{2}}-1\right)}\\
&\ge		d_{n,p,\alpha} \|g\|_{q(\mu)}^{p-q} \inti{\ud a \;a^{p-2}} \int_0^a{\ud b \;\lambda(a+b)b^{q-p}}\\
&\ge		d_{n,p,\alpha} \|g\|_{q(\mu)}^{p-q} \inti{a^{q-1}\lambda(2a) \ud a}\\
&=			d_{n,p,\alpha} \|g\|_{q(\mu)}^{p}\\
&=			d_{n,p,\alpha} \left(\int_{\HN}{|f(x)|^q x_n^{\frac{\alpha-1}{2} - q\frac{\alpha-1}{p}} \ud x}\right)^{p/q}.
\end{align*}
Since $\frac{\alpha-1}{2} - q\frac{\alpha-1}{p} = -n + \frac{nq}{p^*}$, we are done.
\qed

\begin{lm}
\label{lemma}
Let $n \geq 2$, $0<\alpha<2$, and $f \in C_c(\RR)$. Then, $I_{\alpha}^{\RR}(f) = I_{\alpha}^{\RR}(\ft)$, with the understanding that $I_{\alpha}^{\RR}(\ft) = \infty$ if $I_{\alpha}^{\RR}(f) = \infty$.
\end{lm}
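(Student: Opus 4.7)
The plan is to evaluate $I_\alpha^{\RR}(\ft)$ by a change of variables that exploits the conformal nature of $T$. Three identities are needed: the Jacobian $|\det DT(\omega)| = \eta(\omega)^n$ and the relation $\eta(T\omega) = 1/\eta(\omega)$ (both already recorded in the paper), together with the conformal distance identity $|Tx - Ty|^2 = \eta(x)\eta(y)|x-y|^2$. The last follows from writing $T(\omega) = 2J(\omega + e_n) - e_n$, where $J(z) = z/|z|^2$ is the unit-sphere inversion and $e_n$ is the last standard basis vector, and applying the classical identity $|J(x) - J(y)| = |x-y|/(|x|\,|y|)$.

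Substituting $x = Tu$, $y = Tv$ in $I_\alpha^{\RR}(\ft)$ and using $\ft(Tu) = \eta(u)^{-(n-\alpha)/2} f(u)$ (which comes from $n/2^* = (n-\alpha)/2$), the various factors of $\eta$ combine to give
\[
I_\alpha^{\RR}(\ft) = \int_{\RR \times \RR} \frac{(b/a) f(u)^2 + (a/b) f(v)^2 - 2 f(u) f(v)}{|u-v|^{n+\alpha}}\, du\,dv,\quad a := \eta(u)^{(n-\alpha)/2},\; b := \eta(v)^{(n-\alpha)/2}.
\]
Subtracting $I_\alpha^{\RR}(f)$ leaves
\[
\mathrm{Err} = \int_{\RR\times\RR} \frac{(b-a)\bigl[f(u)^2/a - f(v)^2/b\bigr]}{|u-v|^{n+\alpha}}\, du\, dv.
\]
Splitting this integrand into its two additive pieces and relabelling $u \leftrightarrow v$ in the second shows that the two pieces coincide, so
\[
\mathrm{Err} = 2 \int_{\RR} \frac{f(u)^2}{a(u)} \left[\int_{\RR} \frac{a(v) - a(u)}{|u-v|^{n+\alpha}}\, dv\right] du.
\]

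The proof concludes by observing that $a(u) = 2^{(n-\alpha)/2}|u+e_n|^{-(n-\alpha)}$ is a translate of the Riesz kernel of order $\alpha$, and such kernels are pointwise $\alpha$-harmonic off their singularity: $(-\Delta)^{\alpha/2}|x|^{-(n-\alpha)} = 0$ for $x\ne 0$, which is immediate on the Fourier side from $\widehat{|\cdot|^{-(n-\alpha)}} = c|\xi|^{-\alpha}$. Since the inner bracket is, up to a positive constant, $-(-\Delta)^{\alpha/2} a(u)$, it vanishes for every $u \neq -e_n$; as $\{-e_n\}$ has Lebesgue measure zero, $\mathrm{Err} = 0$ and $I_\alpha^{\RR}(\ft) = I_\alpha^{\RR}(f)$. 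The ``$=\infty$'' clause follows because the pointwise integrand identity produced by the change of variables holds in $[0,+\infty]$, so finiteness of one side forces finiteness of the other.

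The main technical obstacle is that the iterated integrals appearing after splitting $\mathrm{Err}$ are not a priori absolutely convergent, so the Fubini and relabelling step must be justified. I will handle this by regularizing the singular kernel as $(|u-v|^2 + \epsilon^2)^{-(n+\alpha)/2}$ and restricting the outer variable to $\{|u+e_n| > \delta\}$, where $a$ is smooth; every manipulation is then legitimate, the pointwise identity $(-\Delta)^{\alpha/2} a \equiv 0$ applies off $-e_n$, and the compact support of $f$ together with the local integrability of $1/a$ near $-e_n$ (which holds since $f$ is bounded and $n-\alpha < n$) allows dominated convergence as $\epsilon, \delta \downarrow 0$.
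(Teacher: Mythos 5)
Your proposal is correct and structurally parallel to the paper's argument: both perform the change of variables $\omega = Tu$, $\zeta = Tv$ using the three conformal identities, expand the square, peel off $I_\alpha^{\RR}(f)$, and then must show an error term of the form
\[
\int_{\RR}\frac{f(u)^2}{a(u)}\left[\int_{\RR}\frac{a(v)-a(u)}{|u-v|^{n+\alpha}}\ud v\right]\ud u
\]
vanishes. Where you diverge from the paper is in how that vanishing is established. The paper translates so the singularity of $a$ sits at the origin, separates the double integral into a product (one radial factor in $x$, one in $t=|y|/|x|$ and the angle $s$), and shows the $t$-integral vanishes identically for every $\epsilon>0$ by the substitution $t\mapsto 1/t$; this is a self-contained, entirely elementary computation. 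You instead recognize $a(u)=2^{(n-\alpha)/2}|u+e_n|^{-(n-\alpha)}$ as a translate of the Riesz kernel and invoke the well-known fact that the Riesz kernel is $\alpha$-harmonic off its pole, read off from $\widehat{|\cdot|^{-(n-\alpha)}}=c|\xi|^{-\alpha}$. These are really the same fact in two guises --- the paper's $t\mapsto 1/t$ cancellation is exactly the proof, in polar coordinates, that $(-\Delta)^{\alpha/2}|x|^{-(n-\alpha)}$ vanishes pointwise for $x\ne 0$ --- but your version is more conceptual and makes the mechanism transparent. The trade-off is that your route requires a few more regularity/regularization details to be fully rigorous: your proposed kernel regularization $(|u-v|^2+\epsilon^2)^{-(n+\alpha)/2}$ does not transform nicely under $T$ (since $|Tu-Tv|^2=\eta(u)\eta(v)|u-v|^2$), so you should instead truncate $\{|u-v|>\delta\}$ after the change of variables and use monotone convergence on the two nonnegative pieces, together with the pointwise principal-value vanishing (which needs $a\in C^2$ near $u$ and $1<\alpha<2$, valid for $u\ne -e_n$); the paper's explicit $A_\epsilon$ truncation handles this by hand. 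Your treatment of the ``$=\infty$'' clause is phrased loosely --- the pointwise identity relates the integrand of $I_\alpha^{\RR}(\ft)$ to a three-term sum, not to the integrand of $I_\alpha^{\RR}(f)$ --- but the needed conclusion does follow from the symmetry of the tilde involution (run the finite-energy argument with $f$ and $\ft$ interchanged), which is essentially what the paper does.
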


\begin{proof}
First, let us define the set
\[A_\epsilon := \left\{(x,y) \in \RR\times\RR : 1-\epsilon < \sqrt{\frac{\eta(y)}{\eta(x)}} < \frac{1}{1-\epsilon}\right\}.\]
Using the transformation $T$, as discussed in the remarks prior to Theorem \ref{rearrange}, and results from \cite{CL2},
\begin{align*}
I_{\alpha}^{\RR}(f)
&= 	\int_{\RR\times\RR}{\frac{\left|f(Tx)-f(Ty)\right|^2}{\left[\eta(x) \left|x-y\right|^2 \eta(y)\right]^{(n+\alpha)/2}}
				\left[\eta(x)\eta(y)\right]^n \ud x \ud y}\\
&=	\int_{\RR\times\RR}{\frac{\left|\eta(x)^{(\alpha-n)/2}\ft(x)-\eta(y)^{(\alpha-n)/2}\ft(y)\right|^2}
				{\left|x-y\right|^{n+\alpha}} \left[\eta(x)\eta(y)\right]^{\frac{n-\alpha}{2}} \ud x \ud y}\\
&=	\lim_{\ez} \int_{(A_\epsilon)^C}{\frac{\ud x \ud y} {\left|x-y\right|^{n+\alpha}}
				\left[\left(\left(\frac{\eta(y)}{\eta(x)}\right)^{\frac{n-\alpha}{2}}-1\right)\ft^2(x)
				+ \left(\left(\frac{\eta(x)}{\eta(y)}\right)^{\frac{n-\alpha}{2}}-1\right)\ft^2(y)+\left(\ft(x)-\ft(y)\right)^2\right]}\\
&=	I_{\alpha}^{\RR}(\ft) + 2\lim_{\ez} \int_{(A_\epsilon)^C}{\frac{\ft^2(x)}{\left|x-y\right|^{n+\alpha}}
				\left[\left(\frac{\eta(y)}{\eta(x)}\right)^{\frac{n-\alpha}{2}}-1\right] \ud x \ud y}.
\end{align*}
We show that the limit on the last line is zero for all $\epsilon > 0$; thus, establishing our result. We write $x=(x',x_n)$, $x'\in\R^{n-1}$, $x_n\in\R$. Then,
\begin{align*}
\int_{(A_\epsilon)^C}{\frac{\ft^2(x)}{\left|x-y\right|^{n+\alpha}}\left[\left(\frac{\eta(y)}{\eta(x)}\right)^{\frac{n-\alpha}{2}}-1\right] \ud x \ud y}
&= 	\int_{(A_\epsilon)^C}{\frac{\ft^2(x',x_n)}{\left|x-y\right|^{n+\alpha}}
				\left[\left(\frac{\left|x'\right|^2+(x_n+1)^2}{\left|y'\right|^2+(y_n+1)^2}\right)^{\frac{n-\alpha}{2}}-1\right] \ud x \ud y}\\
&\mspace{-189.0mu}
		=	\int_{\RR}{\ud x |x|^{n-\alpha} \ft^2(x',x_n - 1)}
				\mspace{-45.0mu} \int_{\left\{y: (1-\epsilon)|x|\leq |y| \leq \frac{|x|}{1-\epsilon}\right\}^C}\mspace{-45.0mu}
				{\ud y \frac{|y|^{\alpha-n}-|x|^{\alpha-n}}{\left|x-y\right|^{n+\alpha}}}\\
&\mspace{-189.0mu}
		=	\left|\SN^{n-2}\right|\left(\int_{\RR}{\ud x \frac{\ft^2(x',x_n - 1)}{|x|^{\alpha}}}\right)
				\left(\int_{\left[1-\epsilon,\frac{1}{1-\epsilon}\right]^C}\mspace{-27.0mu} {\ud t \left(t^{\alpha-1}-t^{n-1}\right)}
				\int_{-1}^1 {\ud s \frac{\left(1-s^2\right)^{(n-3)/2}}{\left(t^2+1-2st\right)^{(n+\alpha)/2}}}\right),
\end{align*}
where the complement in the last integral is with respect to the half line $[0,\infty)$. This product of integrals is zero as the left integral is finite, while the right integral is zero, for all $\epsilon > 0$. Indeed, for the right integral, note that there is no singularity so long as $t \neq 1$.  If we split the integral into $t$ above and below 1, then the latter must be finite, so the integral is finite if the sum is. We compute
\pagebreak[4]
\begin{align*}
\int_{\left[1-\epsilon,\frac{1}{1-\epsilon}\right]^C}\mspace{-27.0mu} {\ud t \left(t^{\alpha-1}-t^{n-1}\right)}
				\int_{-1}^1 {\ud s \frac{\left(1-s^2\right)^{(n-3)/2}}{\left(t^2+1-2st\right)^{(n+\alpha)/2}}}
&=		\int_0^{1-\epsilon}{\ud t \left(t^{\alpha-1}-t^{n-1}\right)}
				\int_{-1}^1 {\ud s \frac{\left(1-s^2\right)^{(n-3)/2}}{\left(t^2+1-2st\right)^{(n+\alpha)/2}}}\\
&			\qquad + \int_{\frac{1}{1-\epsilon}}^{\infty}{\ud t \left(t^{\alpha-1}-t^{n-1}\right)}
				\int_{-1}^1 {\ud s \frac{\left(1-s^2\right)^{(n-3)/2}}{\left(t^2+1-2st\right)^{(n+\alpha)/2}}}.
\end{align*}
In fact, the sum is zero, as
\begin{align*}
\int_{\frac{1}{1-\epsilon}}^{\infty}{\ud t \left(t^{\alpha-1}-t^{n-1}\right)}
				\int_{-1}^1 {\ud s \frac{\left(1-s^2\right)^{(n-3)/2}}{\left(t^2+1-2st\right)^{(n+\alpha)/2}}}
&=		\int_0^{1-\epsilon}{\frac{\ud t}{t^2} \left(t^{1-\alpha}-t^{1-n}\right)}
				\int_{-1}^1 {\ud s \frac{\left(1-s^2\right)^{(n-3)/2}}{\left(1/t^2+1-2s/t\right)^{(n+\alpha)/2}}}\\
&=		\int_0^{1-\epsilon}{\ud t \left(t^{n-1}-t^{\alpha - 1}\right)}
				\int_{-1}^1 {\ud s \frac{\left(1-s^2\right)^{(n-3)/2}}{\left(1+t^2-2st\right)^{(n+\alpha)/2}}}.
\end{align*}
Lastly, we consider the left integral. From \cite{FS1}, there exists $c>0$ so that
\[\int_{\RR}{f^2(x) |x|^{-\alpha} \ud x} \leq c I_{\alpha}^{\RR}(f).\]
Hence, if we assume that $I_{\alpha}^{\RR}(f) < \infty$, then
\[
\int_{\RR}{\ft^2(x',x_n - 1) |x|^{-\alpha} \ud x}
=			2^{-\alpha/2} \int_{\RR}{f^2(Tx) \eta(x)^{n-\alpha/2} \ud x}
=			2^{-\alpha/2} \int_{\RR}{f^2(x',x_n-1) |x|^{-\alpha} \ud x}
\leq		c I_{\alpha}^{\RR}(f) < \infty,
\]
as desired. If $I_{\alpha}^{\RR}(f)$ is not finite, then we need to ask whether $\int_{\RR}{\ft^2(x',x_n - 1) |x|^{-\alpha} \ud x} < \infty$. If so, then the result still holds. If this is not true, then since $\int_{\RR}{\ft^2(x',x_n - 1) |x|^{-\alpha} \ud x} \leq c I_{\alpha}^{\RR}(\ft)$, then $I_{\alpha}^{\RR}(\ft)$ is not finite as well.
\end{proof}

\end{document}